\newtheorem{theorem}{Theorem}
\newtheorem{lemma}[theorem]{Lemma}
\newtheorem{corollary}[theorem]{Corollary}
\newtheorem{fact}[theorem]{Fact}
\theoremstyle{definition}
\newtheorem{remark}[theorem]{Remark}
\def\ot{\operatorname{OT}}
\def\corr{\operatorname{Corr}}
\def\id{\operatorname{id}}
\def\pr{\operatorname{pr}}
\def\Ind#1#2{#1\setbox0=\hbox{$#1x$}\kern\wd0\hbox to 0pt{\hss$#1\mid$\hss}
\lower.9\ht0\hbox to 0pt{\hss$#1\smile$\hss}\kern\wd0}
\def\Notind#1#2{#1\setbox0=\hbox{$#1x$}\kern\wd0\hbox to 0pt{\mathchardef
\nn=12854\hss$#1\nn$\kern1.4\wd0\hss}\hbox to
0pt{\hss$#1\mid$\hss}\lower.9\ht0 \hbox to
0pt{\hss$#1\smile$\hss}\kern\wd0}
\begin{document}

\title{A note on subvarieties of powers of $\ot$-manifolds}

\author{Rahim Moosa}
\address{Rahim Moosa\\
University of Waterloo\\
Department of Pure Mathematics\\
200 University Avenue West\\
Waterloo, Ontario \  N2L 3G1\\
Canada}
\email{rmoosa@uwaterloo.ca}

\author{Matei Toma}
\address{Matei Toma\\
Institut de Math\'ematiques Elie Cartan \\
Universit\'e de Lorraine, B.P. 70239\\
54506 Vandoeuvre-l\`es-Nancy Cedex\\ 
France}
\email{Matei.Toma@univ-lorraine.fr}

\thanks{R. Moosa was partially supported by an NSERC Discovery Grant. M. Toma was partially supported by the ANR project MNGNK, decision N° ANR-10-BLAN-0118.} 

\begin{abstract}
It is shown that the space of finite-to-finite holomorphic correspondences on an $\ot$-manifold is discrete.
When the $\ot$-manifold has no proper infinite complex-analytic subsets, it then follows by known model-theoretic results that its cartesian powers have no interesting complex-analytic families of subvarieties.
The methods of proof, which are similar to [Moosa, Moraru, and Toma ``An essentially saturated surface not of K\"ahler-type", {\em Bull. of the LMS}, 40(5):845--854, 2008], require studying finite unramified covers of $\ot$-manifolds.

\end{abstract}

\date{\today}

\maketitle

\section{introduction}

\noindent
This note is concerned with complex-analytic families of subvarieties in cartesian powers of the compact complex manifolds introduced by Oeljeklaus and the second author in~\cite{ouljeklaustoma}, here referred to as $\ot$-manifolds.
These manifolds are higher dimensional analogues of Inoue surfaces of type $S_M$.
In~\cite{moosamorarutoma}, we, along with Ruxandra Moraru, showed that if $X$ is an Inoue surface of type $S_M$ then $X^n$ contains no infinite complex-analytic families of subvarieties, except for the obvious ones such as $\big(\{a\}\times V:a\in X^m\big)$ where $V$ is a fixed subvariety of~$X^{n-m}$.
Using model-theoretic techniques we were able to reduce the problem to considering only the case of $n=2$.
That case amounted to showing that the set of finite-to-finite holomorphic correspondences on $X$, viewed as subvarieties of $X^2$, is discrete.
Here we extend this result to $\ot$-manifolds in general.
Actually, it is useful to consider the following higher arity version of correspondences: for any compact complex manifold $X$, let $\corr_n(X)$ denote the set of irreducible complex-analytic $S\subset X^n$ such that the co-ordinate projections $\pr_i:S\to X$ are surjective and finite for all $i=1,\dots,n$.
So $\corr_2(X)$ is the set of finite-to-finite holomorphic correspondences.\footnote{It may be worth pointing out that the elements of $\corr_n(X)$ are simply components of intersections of pull-backs of finite-to-finite holomorphic correspondences.
That is, for $n>1$, if $S\in\corr_n(X)$ and $\pi_i:X^n\to X^2$ is the co-ordinate projection $(x_1,\dots,x_n)\mapsto (x_1,x_i)$, for $i=2,\dots, n$, then each $\pi_i(S)\subset X^2$ is a correspondence and $S$ is an irreducible component of $\displaystyle \bigcap_{i=2}^n\pi_i^{-1}(\pi_i(S))$. This is an easy dimension calculation, see~\cite[Lemma~3.2]{moosapillay12}.}

\begin{theorem}
\label{corrdiscrete}
If $X$ is an $\ot$-manifold then $\corr_n(X)$ is discrete for all $n>0$. 
\end{theorem}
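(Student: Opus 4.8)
The plan is to reduce to the case $n=2$ and then to establish rigidity of finite-to-finite correspondences by lifting to the universal cover $\widetilde{X}=\mathbb{H}^s\times\mathbb{C}^t$ and exploiting the affine action of $\pi_1(X)=\mathcal{O}_K\rtimes U$, where $\mathcal{O}_K$ acts by the (partly real) translations $z_j\mapsto z_j+\sigma_j(a)$, $w_k\mapsto w_k+\sigma_{s+k}(a)$ and $U$ acts by the diagonal scalings $z_j\mapsto\sigma_j(u)z_j$, $w_k\mapsto\sigma_{s+k}(u)w_k$.

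First I would dispose of the reduction to $n=2$ using the footnote. Given a connected holomorphic family $(S_u)_u$ in $\corr_n(X)$, for each $i=2,\dots,n$ the proper pushforward $u\mapsto\pi_i(S_u)$ is a holomorphic family in $\corr_2(X)$ (one checks the relevant coordinate projections remain finite and surjective, so that $\pi_i(S_u)$ really lies in $\corr_2(X)$). Granting that $\corr_2(X)$ is discrete, each $\pi_i(S_u)$ is locally constant, say $\pi_i(S_u)=C_i$; but then $S_u$ is an irreducible component of the \emph{fixed} analytic set $\bigcap_{i=2}^n\pi_i^{-1}(C_i)$, which has only finitely many components, so the continuously varying $S_u$ is itself locally constant. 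Thus it suffices to prove that $\corr_2(X)$ is discrete.

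For $n=2$, a correspondence $S\in\corr_2(X)$ lifts to a $(\Gamma\times\Gamma)$-invariant analytic set $\widetilde{S}\subset\widetilde{X}\times\widetilde{X}$ which, over a simply connected region and away from the ramification locus of $\pr_1$, is a finite union of graphs of holomorphic maps $(z,w)\mapsto(Z,W)$ into $\widetilde{X}$. The first key point is that each branch has base part independent of the fibre: restricting $Z_j$ to a generic fibre $\{z\}\times\mathbb{C}^t$ gives, after removing the proper analytic ramification set and invoking Riemann extension, an entire bounded holomorphic map $\mathbb{C}^t\to\mathbb{H}$ (the target being hyperbolic), hence constant by Liouville. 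So $Z=Z(z)$, and the correspondence respects the complex-torus fibration. I would then use the minimality built into $\ot$-manifolds: the translation part of $\mathcal{O}_K$ projects to a \emph{dense} subgroup of the horizontal translations of $\mathbb{H}^s$ and to a dense subgroup of the fibre $\mathbb{C}^t$. Equivariance of a branch under these dense translations (intertwined with elements of $\Gamma$ on the target) forces $Z$ and $W$ to be affine; concretely $Z_j(z)=c_j z_{\tau(j)}+b_j$ for some permutation $\tau$ of the places and constants $c_j,b_j$, and similarly for $W$.

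Discreteness then falls out of the scaling action of $U$, which is exactly the feature distinguishing $\ot$-manifolds from bare torus bundles. Matching the $u$-equivariance of a branch compares the linear coefficients with Galois conjugates $\sigma(u)$ and forces $b_j\big(1-\sigma_j(u')\big)=0$ for a nontrivial unit $u'$; since $|\sigma_j(u')|\neq 1$, every translation constant $b_j$ vanishes, so there are no continuous moduli. The surviving linear coefficients are of the form $\sigma_j(\xi)$ for $\xi$ in a discrete set of field elements (compatibility with the lattice $\mathcal{O}_K$ confines $\xi$ to, essentially, integers of bounded denominator) together with a permutation of the finitely many places. As these data cannot vary along a connected family, $\widetilde{S}$, and hence $S$, is locally constant, giving discreteness of $\corr_2(X)$. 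The step that requires genuine work, and where (as in \cite{moosamorarutoma}) finite unramified covers enter, is passing from the branched, multivalued picture on $\widetilde{X}$ to honest graphs: a branch need not extend across the ramification locus of $\pr_1$, and $\Gamma\times\Gamma$ only permutes branches, so the intertwining data above is a priori defined only up to this permutation. The plan is to pull $S$ back to a suitable finite unramified cover $X'\to X$ — itself again of $\ot$-type, corresponding to a sub-order of $\mathcal{O}_K$ and a finite-index subgroup of $U$ — on which the pullback splits as a disjoint union of graphs of holomorphic self-maps, run the rigidity computation there, and descend. Arranging the cover so that this splitting occurs, while retaining the affine structure of its universal cover, is the main obstacle I expect.
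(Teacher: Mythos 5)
The central gap is your treatment of ramification. Your entire strategy funnels into the step you yourself flag as the main obstacle: finding a finite unramified cover $X'\to X$ over which the pullback of $S$ splits as a disjoint union of graphs. But a disjoint union of graphs projects to $X'$ \emph{without ramification}, whereas ramification is a local phenomenon that survives any \'etale base change: if $\pr_1\colon S\to X$ (or its normalisation) is ramified at a point, then the pullback of $S$ to $X'\times X'$ is ramified at every preimage of that point, no matter which finite unramified $X'$ you choose. Unramified covers can only kill monodromy --- the permutation of branches you describe --- never genuine branching. So the tool you propose cannot overcome the obstacle; what is needed is a proof that there is \emph{no branching at all}. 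This is exactly the nontrivial input the paper uses and your sketch never supplies: by the theorem of Battisti and Oeljeklaus~\cite{battistioeljeklaus}, $\ot$-manifolds have no divisors, so the purity of the branch locus theorem (applicable because the normalisation $\widetilde S$ of $S$ is normal and $X$ is smooth) forces the finite surjective maps $\pr_i\colon\widetilde S\to X$ to have empty branch locus, i.e.\ to be finite unramified coverings. Without this, the passage from the branched multivalued picture to honest graphs --- on which your Liouville, density, and unit-scaling computations all depend --- never gets started; with it, your covers $X'$ exist for trivial covering-space reasons.

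It is also worth noting how differently the argument concludes once unramifiedness is available. The paper does no explicit computation on $\mathbb H^s\times\mathbb C^t$ at all: since $\widetilde S\to X$ is a finite unramified cover, $\widetilde S$ is again an $\ot$-manifold (Lemma~\ref{ot-finitecover}), hence $H^0(\widetilde S,T_{\widetilde S})=0$ (Lemma~\ref{generalisedot-nvf}), and Namba's deformation-theoretic criterion (Fact~\ref{deformation}, via Lemma~\ref{rigid}) shows the map $\widetilde S\to X^n$ is rigid over $X^n$; universality of the Douady space then makes the component containing $S$ a single point --- for all $n$ at once, with no reduction to $n=2$ (your reduction, incidentally, also needs care, since flat families do not push forward to flat families; one would have to work in the cycle space). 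Your equivariance computations may be salvageable --- for instance the Liouville step can be made single-valued by taking symmetric functions of the branches --- but they are a long detour around the one fact that carries the real content, and that fact is absent from your proposal.
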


The proof, which we will give in Section~\ref{theproof},  follows to some extent what was done for Inoue surfaces of type $S_M$ in~\cite{moosamorarutoma}.
But this approach leads naturally to the consideration of finite unramified coverings of $\ot$-manifolds, and the latter
are not formally instances of the original construction in~\cite{ouljeklaustoma}.
However,
we show in  Section~\ref{generalisedot} that a mild generalisation of that construction leads to a class of manifolds which is closed   under finite unramified coverings. We call these manifolds also $\ot$-manifolds and the theorem is valid for this larger class.

The theorem is particularly significant when $X$ has no proper positive dimensional subvarieties, because  of the following fact coming from model theory.

\begin{fact}
\label{allcorr}
Suppose $X$ is a compact complex manifold that is not an algebraic curve, is not a complex torus, and has no proper infinite complex-analytic subsets.
Then every 
irreducible complex-analytic subset of a cartesian power of $X$ is
a cartesian product of points and elements of $\corr_n(X)$ for various $n>0$.
\end{fact}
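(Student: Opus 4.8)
The plan is to pass through the model-theoretic dictionary for compact complex manifolds and apply the trichotomy theorem for minimal types. Recall that the theory of compact complex manifolds is totally transcendental and eliminates imaginaries and quantifiers, and that under this dictionary an irreducible complex-analytic subset $S\subseteq X^N$ corresponds to the stationary generic type of a generic point $s=(s_1,\dots,s_N)\in S$, with $S$ recovered as the locus of $s$. The hypothesis that $X$ has no proper infinite complex-analytic subsets says precisely that $X$ is a \emph{minimal} type: every forking extension of its generic type is algebraic, since proper subvarieties are finite. I would first invoke the trichotomy: a minimal type is non-locally-modular, locally modular and nontrivial, or trivial. In the non-locally-modular case $X$ is nonorthogonal to the projective line and known results identify $X$ as an algebraic curve; in the locally modular nontrivial case $X$ is nonorthogonal to a simple complex torus and is itself a complex torus. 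Both are excluded by hypothesis, so $X$ is \emph{trivial}: the forking pregeometry on the realisations of $X$ is degenerate, whence a set of realisations is independent if and only if it is pairwise independent, and two non-algebraic realisations are either independent or interalgebraic.

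Next I would exploit triviality to decompose $S$. Fix a generic point $s=(s_1,\dots,s_N)$ of $S$ over a suitable base. Separate the coordinates into those with $s_i\in\operatorname{acl}(\emptyset)$, which by stationarity are constant and contribute \emph{points}, and the remaining coordinates, each of which is then generic in $X$. On the latter define $i\sim j$ to mean $s_i\nind s_j$; by triviality and the transitivity of interalgebraicity this is an equivalence relation, and coordinates lying in distinct classes $C_1,\dots,C_k$ are independent. Triviality upgrades this pairwise independence to mutual independence of the subtuples $s_{C_1},\dots,s_{C_k}$, so that $\operatorname{tp}(s)$ is the free amalgam of the $\operatorname{tp}(s_{C_m})$. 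Passing back through the dictionary, $S$ is, after reordering coordinates, the cartesian product of the points coming from the algebraic coordinates with the loci $S_{C_m}:=\operatorname{locus}(s_{C_m})$.

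It remains to identify each $S_{C_m}$ as an element of $\corr_{|C_m|}(X)$. For $i\in C_m$ the coordinate $s_i$ is generic in $X$, so the projection $\pr_i\colon S_{C_m}\to X$ is dominant, hence surjective since $X$ is compact. Interalgebraicity of the coordinates within $C_m$ forces $\dim S_{C_m}=\dim X$ and makes the fibres of each $\pr_i$ finite. Thus every $\pr_i$ is finite and surjective, which is exactly the defining condition for membership in $\corr_{|C_m|}(X)$, completing the decomposition.

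The genuinely hard input is the trichotomy step, that is, knowing that ``not an algebraic curve and not a complex torus'' suffices to force triviality of the minimal type; this rests on the full trichotomy for compact complex manifolds together with the geometric identification of the nonorthogonality classes with curves and tori, and it is this input that the phrase ``known model-theoretic results'' is meant to cover. By contrast, once triviality is in hand, the product decomposition and the recognition of the factors as correspondences are formal consequences of the degeneracy of the pregeometry and of the correspondence between irreducible subvarieties and stationary types.
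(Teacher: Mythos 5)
Your proposal is correct and takes essentially the same route as the paper: the paper's proof of this Fact is just a citation of Proposition~5.1 of \cite{pillayscanlon2003} (the trichotomy step, by which a strongly minimal compact complex manifold that is neither an algebraic curve nor a complex torus has trivial geometry) together with Lemma~3.3(b) of \cite{moosapillay12} (the decomposition, via triviality, of irreducible subvarieties of cartesian powers into products of points and elements of $\corr_n(X)$). Your two steps are precisely blind reconstructions of the proofs of those two cited results, so the approaches coincide.
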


\begin{proof}
This is Proposition~5.1 of~\cite{pillayscanlon2003} together with Lemma~3.3(b) of~\cite{moosapillay12}.
\end{proof}

That $\ot$-manifolds without proper positive dimensional subvarieties are ubiquitous in all dimensions follows from work of Ornea and Verbitsky~\cite{ornea-verbitsky} showing that we get examples whenever $X$ is the $\ot$-manifold corresponding to a number field that has precisely two complex embeddings which are not real.

Putting together the Theorem and the Fact, we conclude:

\begin{corollary}
\label{nofamilies}
Suppose $X$ is an $\ot$-manifold that has no proper infinite complex-analytic subsets.
Then, for all $n>0$, $X^n$ has no infinite complex-analytic families of subvarieties that project onto each co-ordinate.
\end{corollary}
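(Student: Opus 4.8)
The plan is to combine the structural description in Fact~\ref{allcorr} with the discreteness supplied by Theorem~\ref{corrdiscrete}. An infinite complex-analytic family of subvarieties of $X^n$ amounts to a positive-dimensional analytic subset $T$ of the Douady space $D(X^n)$; passing to a component I may assume the base $T$ is irreducible, and that the members $V_t$ are irreducible subvarieties, which is the case governed by Fact~\ref{allcorr}. To invoke that Fact I first check its hypotheses: an $\ot$-manifold is not an algebraic curve, since $\ot$-manifolds have complex dimension at least two; it is not a complex torus, being non-K\"ahler while tori are K\"ahler; and by assumption it has no proper infinite complex-analytic subsets. Hence Fact~\ref{allcorr} applies to every cartesian power of $X$.

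Next I would read off from Fact~\ref{allcorr} the shape of an irreducible $V \subseteq X^n$ that projects onto every coordinate. The Fact writes $V$, after permuting coordinates, as $A \times S_1 \times \cdots \times S_k$, where $A$ is a tuple of fixed points occupying a block $B_0$ of coordinates and each $S_j \in \corr_{m_j}(X)$ occupies a block $B_j$. Were $B_0$ nonempty, the projection of $V$ to a coordinate in $B_0$ would be a single point, and hence not all of $X$ as $X$ is infinite, contradicting surjectivity onto that coordinate. So $B_0 = \emptyset$ and $V = S_1 \times \cdots \times S_k$ is a product of correspondences, the blocks $B_1, \dots, B_k$ forming a partition of $\{1,\dots,n\}$.

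To finish I would use that the combinatorial type of this decomposition (the partition into blocks) takes only finitely many values over $T$; since $T$ is irreducible, one type occurs on a dense open subset $T^\circ$. There $V_t = S_{t,1} \times \cdots \times S_{t,k}$, and projecting the family to the coordinates in $B_j$ exhibits $t \mapsto S_{t,j}$ as a holomorphic map from $T^\circ$ into $\corr_{m_j}(X) \subseteq D(X^{m_j})$. By Theorem~\ref{corrdiscrete} the target is discrete, and $T^\circ$ is connected (being a dense open subset of the irreducible $T$), so each such map is constant, $S_{t,j} = S_j$. Hence $V_t = S_1 \times \cdots \times S_k$ does not depend on $t$, so the family is not infinite, a contradiction. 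This proves the Corollary.

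The step I expect to be the main obstacle is not any single estimate but the bookkeeping that makes this heuristic rigorous: organising the finitely many combinatorial types so that on each the assignment $t \mapsto S_{t,j}$ is genuinely a holomorphic map into $D(X^{m_j})$ (this uses that the projection of the universal family restricted to the block $B_j$ is flat over a dense open of $T^\circ$), and thereby transferring the \emph{abstract} discreteness of $\corr_{m_j}(X)$ from Theorem~\ref{corrdiscrete} to the parameter space of the given family. A related point worth flagging is that the statement is naturally about irreducible members: a reducible subvariety can carry a freely moving isolated point while still projecting onto every coordinate, so it is the irreducible case, the one handled by Fact~\ref{allcorr}, that the Corollary concerns.
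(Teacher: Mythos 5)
Your proposal is correct and takes exactly the approach of the paper, whose own proof of the Corollary is the single line ``putting together the Theorem and the Fact'': you combine Fact~\ref{allcorr} with Theorem~\ref{corrdiscrete} precisely as intended. The details you supply --- verifying the hypotheses of the Fact (dimension $\geq 2$, non-K\"ahler hence not a torus), eliminating point factors via surjectivity onto each coordinate, and converting discreteness of $\corr_{m_j}(X)$ into constancy of each factor over a connected base --- are the natural elaboration that the paper leaves to the reader.
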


\begin{remark}
The model theorist should note that for $X$ to have no proper infinite complex-analytic subsets is exactly {\em strong minimality} of $X$ as a first-order structure in the language of complex-analytic sets.
Strongly minimal $\ot$-manifolds are of {\em trivial $\operatorname{acl}$-geometry} by the manifestation of the Zilber trichotomy in this context.
By~\cite[Proposition~3.5]{moosapillay12}, the discreteness of $\corr_2(X)$ implies that strongly minimal $\ot$-manifolds are {\em essentially saturated} in the sense of~\cite{sat}.
In particular, we obtain in every dimension examples of essentially saturated manifolds that are not of K\"ahler-type.
This was the original motivation for both~\cite{moosamorarutoma} and the current note.
\end{remark}

\bigskip
\section{Finite covers of $\ot$-manifolds}
\label{generalisedot}

\noindent
We will quickly review the original construction of $\ot$-manifolds from~~\cite{ouljeklaustoma} and then describe how to generalise it. 

Fix a number field $K$ admitting $n=s+2t$ distinct embeddings into $\mathbb C$, which we will denote by $\sigma_1,\dots,\sigma_n$ where $\sigma_1,\dots,\sigma_s$ are real and each $\sigma_{s+i}$ is complex conjugate to $\sigma_{s+i+t}$. Assume that $s$ and $t$ are positive. By Dirichlet's Theorem the multiplicative group of units $\mathcal O_K^{*}$ of the ring of integers $\mathcal O_K$ of $K$ has rank $s+t-1$. The subgroup 
$$\mathcal O_K^{*,+}:=\{a\in\mathcal O_K^*:\sigma_i(a)>0\text{ for all }1\leq i\leq s\}$$
of ``positive" units is free abelian of finite index in $\mathcal O_K^{*}$. 
 Let $U$ be a rank $s$ subgroup of $\mathcal O_K^{*,+}$
that is admissible for $K$ in the sense of~\cite{ouljeklaustoma}.
With respect to the natural action of $U$ on the additive group $\mathcal O_K$, consider the semidirect product $\Gamma=U\ltimes\mathcal O_K$.
Let $m=s+t$ and consider the action of $\Gamma$ on $\mathbb C^m$ given by,
$$(a,x)(z_1,\dots,z_m):=\big(\sigma_1(ax)+\sigma_1(a)z_1,\dots,\sigma_m(ax)+\sigma_m(a)z_m\big).$$
As $U<\mathcal O_K^{*,+}$, this action leaves $\mathbb H^s\times \mathbb C^t$ invariant, and the admissibility condition is equivalent to the action being proper and discontinuous.
The original $\ot$-manifold, denoted by $X(K,U)$, is the quotient of $\mathbb H^s\times \mathbb C^t$ by this action. In the sequel we will denote these manifolds by $X(\mathcal O_K, U)$ in order to distinguish them from their generalisations.

The above construction is generalised by replacing the role of $\mathcal O_K$ in $\Gamma$ by any rank $n$ additive subgroup $M\leq\mathcal O_K$ that is stable under the action of $U$.
We say then that {\em $U$ is admissible for $M$}.
Taking $\Gamma=U\ltimes M$, we again get a proper and discontinuous action on $\mathbb H^s\times \mathbb C^t$, and the quotient is denoted by $X(M,U)$.
We will continue to call  these compact complex manifolds  {\em  $\ot$-manifolds}.
To avoid confusing them with the previous construction we will occasionally say that  they are of type $X(M,U)$ (otherwise of type $X(\mathcal O_K, U)$).
Note that the possibility of generalising the original construction by replacing $\mathcal O_K$ with an order of $K$ is already mentioned in~\cite{ouljeklaustoma}. However only the $\mathbb Z$-submodule structure of $M$ and the stability under the $U$-action are necessary to make the construction work.
   
The universal cover of $X(M,U)$ is  $\mathbb H^s\times \mathbb C^t$ and the fundamental group is $U\ltimes M$.
As the latter is of finite index in $U\ltimes\mathcal O_K$, we see that
$X(M,U)$ is a finite unramified covering of $X(\mathcal O_K,U)$.
In fact, all finite unramified covers are of this form:

\begin{lemma}
\label{ot-finitecover}
The class of $\ot$-manifolds of type $X(M,U)$ is closed under finite unramified coverings.
\end{lemma}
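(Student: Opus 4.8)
The plan is to translate the statement into group theory and then manufacture the semidirect-product structure by a single explicit conjugation. First I would reduce to connected covers: a connected finite unramified covering $Y\to X(M,U)$ is classified by a finite-index subgroup $\Gamma'\le\Gamma=U\ltimes M=\pi_1(X(M,U))$, and since $\mathbb H^s\times\mathbb C^t$ is simply connected it is also the universal cover of $Y$, so $Y=(\mathbb H^s\times\mathbb C^t)/\Gamma'$ with $\Gamma'$ acting by the restriction of the $\Gamma$-action. (A disconnected cover is a disjoint union of such pieces.) Thus it suffices to show that every finite-index $\Gamma'\le\Gamma$ is, after conjugation by a biholomorphism of $\mathbb H^s\times\mathbb C^t$ preserving the construction, of the form $U'\ltimes M'$ with admissible data.

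Next I would isolate the candidate data. Put $M':=\Gamma'\cap M$ and let $U'$ be the image of $\Gamma'$ under the projection $\Gamma\to U$. Then $M'$ has finite index in $M$, hence is a rank $n$ additive subgroup of $\mathcal O_K$, and $U'$ has finite index in $U$, hence is a rank $s$ subgroup of $\mathcal O_K^{*,+}$; moreover $1\to M'\to\Gamma'\to U'\to 1$ is exact. A short computation with the semidirect-product multiplication shows that $M'$ is stable under $U'$, and because $\Gamma'\le\Gamma$ the action of $U'\ltimes M'$ on $\mathbb H^s\times\mathbb C^t$ remains proper and discontinuous with compact quotient (it is a further finite cover), so $U'$ is admissible for $M'$. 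The one remaining point is that $\Gamma'$ itself, not merely its subquotients, is isomorphic, compatibly with the action, to $U'\ltimes M'$.

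For this, choose generators $u_1,\dots,u_s$ of $U'$ and lifts $\gamma_i=(u_i,x_i)\in\Gamma'$ with $x_i\in M$. Conjugating $\Gamma'$ by the translation $z\mapsto z+(\sigma_1(v),\dots,\sigma_m(v))$, for $v\in K$, is a biholomorphism of $\mathbb H^s\times\mathbb C^t$ since $\sigma_k(v)\in\mathbb R$ for $k\le s$, and it replaces $\gamma_i$ by $(u_i,\,x_i-(u_i-1)v)$. So it is enough to find a single $v\in K$ with $(u_i-1)v\equiv x_i \pmod{M'}$ for all $i$: the conjugated generators then have translation part in $M'$, and multiplying by suitable elements of $M'$ produces $(u_1,0),\dots,(u_s,0)$, so the conjugate of $\Gamma'$ equals $U''\ltimes M'$ with $U''=\langle(u_1,0),\dots,(u_s,0)\rangle$, whence $Y\cong X(M',U'')$. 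The commuting relations in $U'$ together with $\Gamma'\cap M=M'$ force the assignment $u_i\mapsto x_i$ to be a cocycle modulo $M'$, so the existence of $v$ is a cohomology-vanishing statement; the available input is that each $u_i-1$ acts invertibly on $K=M\otimes\mathbb Q$ (its eigenvalues are the $\sigma_k(u_i)-1\ne 0$), so the Koszul complex of the commuting operators $u_1-1,\dots,u_s-1$ is acyclic and the system is solvable over $K$. When $s=1$, as for the Inoue surfaces of \cite{moosamorarutoma}, there is a single operator and the system is immediately solvable; the genuinely new content is $s>1$.

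I expect the main obstacle to be precisely the passage from $K$ to the finite level: solving $(u_i-1)v\equiv x_i$ modulo the lattice $M'$ rather than merely over $K$, since $u_i-1$ is invertible on $K$ but only injective on $M'$, its cokernel being the nontrivial finite group $M'/(u_i-1)M'$. Controlling this finite discrepancy — showing the mod-$M'$ cocycle really is killed by some $v\in K$, and not only by an element of $M\otimes\mathbb Q$ with uncontrolled denominators at the primes dividing $[M:M']$ — is the delicate step, and is where the finite $U'$-module $M/M'$ has to be analysed; once it is in hand, the remainder is bookkeeping.
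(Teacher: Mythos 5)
You have followed the same route as the paper's own (very terse) proof: classify connected covers by finite\--index subgroups $\Gamma'\le U\ltimes M$, set $M'=\Gamma'\cap M$ and $U'=$ the image of $\Gamma'$ in $U$, check admissibility, and identify the cover with $X(M',U')$. Up to that point your argument agrees with the paper and is fine. The difference is that the paper dismisses the final identification with ``it is not hard to check'', whereas you correctly isolate what it actually requires: that some translation of $\mathbb H^s\times\mathbb C^t$ conjugate $\Gamma'$ onto the standard copy $U'\ltimes M'$, equivalently that the congruences $(u_i-1)v\equiv x_i\pmod{M'}$ have a common solution $v\in K$, equivalently that the extension $1\to M'\to\Gamma'\to U'\to 1$ splits. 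But your proposal then stops exactly there: your last paragraph concedes that solvability modulo $M'$ (as opposed to over $K$) is ``the delicate step'' which still ``has to be analysed''. That is a genuine gap, and it is the entire content of the lemma; as you say yourself, everything else is bookkeeping. Moreover, the rational solvability you do establish is no progress at all: $H^i(U',K)=0$ says only that the obstruction group is finite. The obstruction itself is the image of your class $[\bar c]\in H^1(U',M/M')$ under the connecting map to $H^2(U',M')$, which is precisely the extension class of $\Gamma'$, and finiteness of $H^2(U',M')$ does not make it vanish.

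Worse, no analysis of the finite module $M/M'$ can close the gap, because for $s\ge 2$ the step can genuinely fail --- and with it the lemma as literally stated, a problem the paper's one-line proof shares. Take $s=2$, $t=1$, $M=\mathcal O_K$, and replace the two generators of an admissible $U$ by their $N$-th powers with $N=|(\mathcal O_K/\mathfrak p)^\times|$ for a prime $\mathfrak p$; admissibility is an archimedean condition and survives passage to finite\--index subgroups, while now $u_1\equiv u_2\equiv 1\pmod{\mathfrak p}$, so $H^2(U,M)=M/\big((u_1-1)M+(u_2-1)M\big)$ surjects onto $\mathcal O_K/\mathfrak p\neq 0$ and is finite. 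Let $l$ be its exponent and $M'=lM$. Then the inclusion $M'\subset M$ induces the zero map $H^2(U,M')\to H^2(U,M)$ (it is multiplication by $l$ under the natural identification), so the connecting map $H^1(U,M/M')\to H^2(U,M')\cong H^2(U,M)$ is surjective; choosing a cocycle $\bar c$ with nonzero image and setting $\Gamma'=\{(a,x)\in U\ltimes M:\ x\equiv c(a)\ (\mathrm{mod}\ M')\}$ gives a finite\--index subgroup which is a non-split extension of $U$ by $M'$. For this $\Gamma'$ your system has no solution; in fact, since in any group $U''\ltimes M''$ the subgroup $M''$ is the unique maximal normal abelian subgroup (the same commutator computation you use), $\Gamma'$ is not isomorphic to the fundamental group of any $\ot$-manifold, so this cover is not an $\ot$-manifold of any type. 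What your framework does prove --- and what the paper's application actually needs --- is weaker: since $M/M'$ and the image of $U'$ in $\operatorname{Aut}(M/M')$ are finite, $\bar c$ vanishes identically on the subgroup of $k$-th powers of $U'$ for suitable $k$, so $\Gamma'$ contains the standard split group $\{a^k:a\in U'\}\ltimes M'$ as a finite\--index subgroup. Hence every finite unramified cover of an $\ot$-manifold is itself finitely covered by an $\ot$-manifold; combined with the injectivity of pullback of holomorphic vector fields along finite unramified covers, this still gives $H^0(\widetilde Z_d,T_{\widetilde Z_d})=0$, which is all that the proof of Theorem~\ref{corrdiscrete} (via Lemma~\ref{generalisedot-nvf} and Lemma~\ref{rigid}) uses.
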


\begin{proof}
Given $X(M,U)$, such a covering would correspond to a finite index subgroup $\Gamma_1\leq U\ltimes\ M$.
Taking $U_1$ to be the image of $\Gamma_1$ in $U$, and setting $M_1:=\Gamma_1\cap M$, it is not hard to check that $U_1$ is admissible for $M_1$ and that the covering is nothing other than $X(M_1,U_1)$.
\end{proof}

Much of the theory of $\ot$-manifolds developed in~\cite{ouljeklaustoma} goes through in this more general setting. In particular,

\begin{lemma}
\label{generalisedot-nvf}
If $X=X(M,U)$ is an  $\ot$-manifold then $H^0(X,T_X)=0$.
\end{lemma}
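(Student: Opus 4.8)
The plan is to pass to the universal cover and show that there are no nonzero $\Gamma$-invariant holomorphic vector fields on $\mathbb{H}^s\times\mathbb{C}^t$, where $\Gamma=U\ltimes M$. A global holomorphic vector field on $X$ lifts to a $\Gamma$-invariant one $\tilde V=\sum_{j=1}^m f_j\,\partial_{z_j}$ with each $f_j$ holomorphic on $\mathbb{H}^s\times\mathbb{C}^t$. Since each $\gamma=(a,x)\in\Gamma$ acts as an affine map with diagonal linear part $\operatorname{diag}(\sigma_1(a),\dots,\sigma_m(a))$, comparing $d\gamma(\tilde V(z))$ with $\tilde V(\gamma z)$ yields the transformation law $f_j(\gamma z)=\sigma_j(a)\,f_j(z)$ for every $j$ and every $\gamma$. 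The whole argument consists in extracting enough from this single identity.

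First I would produce growth bounds on the $f_j$ that are independent of the $\mathbb{C}^t$-coordinates. For each $j$ I seek a positive function $g_j$ on $\mathbb{H}^s\times\mathbb{C}^t$ with $g_j(\gamma z)=|\sigma_j(a)|^{-1}g_j(z)$, for then $g_j|f_j|$ is $\Gamma$-invariant, descends to the compact manifold $X$, and is therefore bounded. Writing $y_k=\operatorname{Im} z_k$ for $k\le s$, one has $y_k(\gamma z)=\sigma_k(a)y_k(z)$, so a product $g_j=\prod_{k\le s}y_k^{c_k}$ transforms by $\prod_k\sigma_k(a)^{c_k}$. Thus it suffices to solve $\sum_{k\le s}c_k\log\sigma_k(a)=-\log|\sigma_j(a)|$ for all $a\in U$, which is possible precisely because the admissibility of $U$ says that $a\mapsto(\log\sigma_1(a),\dots,\log\sigma_s(a))$ identifies $U\otimes\R$ with $\R^s$, so that $\log\sigma_1,\dots,\log\sigma_s$ form a basis of $\operatorname{Hom}(U,\R)$. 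The resulting bound $|f_j(z)|\le C_j\prod_{k\le s}y_k^{-c_k}$ is independent of $z_{s+1},\dots,z_m$, so for fixed $\mathbb{H}^s$-coordinates each $f_j$ is a bounded entire function of the $\mathbb{C}^t$-variables; Liouville then forces $f_j$ to depend only on $(z_1,\dots,z_s)$.

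It then remains to handle holomorphic functions on $\mathbb{H}^s$ satisfying $f_j(\gamma z)=\sigma_j(a)f_j(z)$. Taking $a=1$ shows $f_j$ is invariant under the real translations $z_k\mapsto z_k+\sigma_k(x)$, $k\le s$, for all $x\in M$. Since $M$ has finite index in $\mathcal{O}_K$, the image $\pr(M)=\{(\sigma_1(x),\dots,\sigma_s(x)):x\in M\}$ is dense in $\R^s$ — a standard fact which follows from the nondegeneracy of the trace form together with the injectivity of the complex embeddings $\sigma_{s+i}$ (any $\Z$-valued functional on $\pr(\mathcal{O}_K)$ is $x\mapsto\operatorname{Tr}(\omega x)$ for some $\omega\in K$ whose complex coordinates would then have to vanish, forcing $\omega=0$). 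By continuity $f_j$ is then invariant under all of $\R^s$, and a holomorphic function on $\mathbb{H}^s$ invariant under every real translation is constant. Finally, if $f_j\equiv b_j$ then $b_j=\sigma_j(a)b_j$; choosing any unit $a\in U$ with $a\neq 1$ (which exists as $\operatorname{rank}U=s\ge1$), injectivity of the embedding $\sigma_j$ gives $\sigma_j(a)\neq1$, whence $b_j=0$. Thus $\tilde V=0$.

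I expect the main obstacle to be the second paragraph: correctly isolating the invariant weights $g_j$, and thereby the $\mathbb{C}^t$-independent bounds, since this is the step that genuinely uses the admissibility of $U$ rather than merely the group-theoretic data of $\Gamma$. The density of $\pr(M)$ in $\R^s$ is the other non-formal input, but it is a standard consequence of the arithmetic of $K$.
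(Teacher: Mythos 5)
Your proof is correct, but your route differs from the paper's in instructive ways, so a comparison is worthwhile. The paper's own proof is essentially a citation: for manifolds of type $X(\mathcal O_K,U)$ it invokes Proposition~2.5 of Oeljeklaus--Toma, and then observes that the only arithmetic input that changes when $\mathcal O_K$ is replaced by $M$ is the density of the image of $M$ in $\R^s$ under $(\sigma_1,\dots,\sigma_s)$, which it gets for free from finite index ($l\mathcal O_K\subseteq M$ for some $l$, and a dilate of a dense subgroup is dense). Your argument reconstructs that cited proof from scratch with the same skeleton --- lift to $\mathbb H^s\times\mathbb C^t$, kill the $\mathbb C^t$-dependence by Liouville, use density of real translations to force constancy on $\mathbb H^s$, then use a unit $a\neq 1$ to force vanishing --- and your trace-form proof of density is also fine, just longer than the finite-index reduction. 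The one genuine divergence is the boundedness step, and there your own assessment of what is essential is slightly off: you manufacture weights $g_j=\prod_{k\leq s} y_k^{c_k}$ using admissibility of $U$ and descend $g_j|f_j|$ to the compact quotient, but admissibility is not needed for this step. Taking only $a=1$ in your transformation law, each $f_j$ is already invariant under translation by the image $\sigma(M)$ of $M$ under $(\sigma_1,\dots,\sigma_m)$, which is a full lattice of rank $n=s+2t$ in $\R^s\times\C^t$ because $M$ has rank $n$; hence $|f_j|$ is bounded on each slice $\{\operatorname{Im}z_k=y_k,\ k\leq s\}$, since such a slice modulo $\sigma(M)$ is a compact real torus. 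That simpler mechanism (which is the one in the classical argument) makes Liouville available immediately and shows that the lemma uses admissibility of $U$ only insofar as it is needed for $X$ to exist at all; the arithmetic content really is concentrated in the density statement, exactly as the paper's two-line proof suggests.
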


\begin{proof}
For $\ot$-manifolds of type $X(\mathcal O_K,U)$ this is Proposition~2.5 of~\cite{ouljeklaustoma}.
Imitating that argument, it suffices to prove for $M$ a rank $n$ additive subgroup of $\mathcal O_K$, that the image of $M$ in $\mathbb R^s$ under $(\sigma_1,\dots,\sigma_s)$ is dense.
But this is the case because $M$ has finite index in $\mathcal O_K$ and the latter does have dense image (see the proof of Lemma~2.4 of~\cite{ouljeklaustoma}).
\end{proof}

The following remarks serve as further evidence that the above extension of the definition of $\ot$-manifolds is natural. 

\begin{remark}
{\em Any $\ot$-manifold of type $X(M,U)$ admits a finite unramified  cover of type $X(\mathcal O_K,U)$.}

Indeed, since $M$ is of maximal rank in $\mathcal O_K$, there exists a positive integer $l$ such that $l\mathcal O_K\subset M$. Thus $X(l\mathcal O_K,U)$ is a finite unramified cover of $X(M,U)$. But the multiplication by $l$ at the level of  $\mathbb H^s\times \mathbb C^t$ conjugates the actions of $U\ltimes\mathcal O_K$ and of $U\ltimes l\mathcal O_K$ and thus induces an isomorphism between  $X(\mathcal O_K,U)$ and $X(l\mathcal O_K,U)$.
\end{remark}

\begin{remark}
{\em When $s=t=1$ the class $\ot$-manifolds of type $X(M,U)$ coincides with the class of Inoue surfaces of type $S_M$ defined in \cite{I}. }

Indeed, if one starts with the manifold $X(M,U)$, then choosing a generator $a$ of $U$ with $\sigma_1(a)>1$ and a base $(\alpha_1,\alpha_2,\alpha_3)$ of $M$ over $\mathbb Z$ one obtains a matrix $A(a)\in GL(3,\mathbb Z)$ which represents the action of $a$ on $M$ with respect to this basis. Applying the embedding $\sigma_k$ to the relation 
$a(\alpha_1,\alpha_2,\alpha_3)^\top=A(a)(\alpha_1,\alpha_2,\alpha_3)^\top$ shows that $(\sigma_k(\alpha_1),\sigma_k(\alpha_2),\sigma_k(\alpha_3))^\top$ is an eigenvector of $A(a)$ associated to the eigenvalue $\sigma_k(a)$. In particular this implies $A(a)\in SL(3,\mathbb Z)$ since  $\sigma_1(a)>0$. At this point one sees that $X(M,U)$ coincides with the surface $S_{A(a)}$ as defined in \cite{I}. 

Conversely, starting with any matrix $A\in SL(3,\mathbb Z)$, with one real eigenvalue larger than $1$ and two complex non-real  eigenvalues, we denote by $K$ the splitting field of the characteristic polynomial $\chi_A$ of $A$ over $\mathbb Q$. Then there exists an element $a\in\mathcal O^{*,+}_K$ such that the eigenvalues of $A$ (i.e the roots of $\chi_A$) are precisely $\sigma_1(a), \sigma_2(a), \sigma_3(a)$. We find now an eigenvector $v\in \mathbb Z[\sigma_1(a)]^3$ associated to $\sigma_1(A)$ by solving the system $(A-\sigma_1(a)I_3)v^\top=0$ over $K$. There exist now elements $\alpha_1,\alpha_2,\alpha_3\in \mathcal O_K$ such that $v=(\sigma_1(\alpha_1),\sigma_1(\alpha_2),\sigma_1(\alpha_3))$. Moreover  $\alpha_1,\alpha_2,\alpha_3$ are linearly independent over $\mathbb Q$ since a linear relation would entail a linear relation between the components $v_1,v_2,v_3$ of $v$ over $\mathbb Q$, which combined with the equations $(A-\sigma_1(a)I_3)v^\top=0$ would show that $\sigma_1(a)$ is quadratic over $\mathbb Q$. Now choosing $M$ to be the $\mathbb Z$-sumbodule of $K$ generated by  $\alpha_1,\alpha_2,\alpha_3$ and $U$ the multiplicative group generated by $a$ we get again
$X(M,U)=S_{A(a)}$.
\end{remark}

\bigskip
\section{The Proof}
\label{theproof}

\noindent
As in the case of Inoue surfaces of type $S_M$ studied in~\cite{moosamorarutoma}, we will make use of some deformation theory to prove the main theorem.
But we will need a bit more than was used in~\cite{moosamorarutoma}.
We say that a holomorphic map $f:V\to W$ between compact complex manifolds is {\em rigid over $W$} if there are no nontrivial deformations of $f$ that keep $W$ fixed.
More precisely: Whenever $\mathcal V\to D$ is a proper and flat holomorphic map of compact complex varieties with $V=\mathcal V_d$ for some $d\in D$, and $\mathcal F:\mathcal V\to D\times W$ is a holomorphic map over $D$ with $\mathcal F_d=f$, then there is an open neighbourhood $U$ of $d$ in $D$ and a diagram
$$\xymatrix{
\mathcal V_U\ar[rd]\ar[drrr]^{\mathcal F_U}\ar[dd]_\phi\\
&U&&U\times W\ar[ll]\\
U\times V\ar[ur]\ar[urrr]_{\id_U\times f}
}$$
where $\phi$ is a biholomorphism.
In particular $\mathcal F_s(\mathcal V_s)=f(V)$ for all $s\in U$.

\begin{fact}[Section~3.6 of~\cite{namba}]
\label{deformation}
Suppose $f:V\to W$ is a holomorphic map between compact complex manifolds such that
\begin{itemize}
\item
$H^0(V,f^*T_W)=0$, and
\item
$f_*:H^1(V,T_V)\to H^1(V,f^*T_W)$ is injective.
 \end{itemize}
 Then $f$ is rigid over $W$.
 \end{fact}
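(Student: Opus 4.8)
The plan is to recognize Fact~\ref{deformation} as a standard consequence of the deformation theory of holomorphic maps with fixed target, and to prove it by showing that the relevant space of infinitesimal deformations vanishes. First I would encode deformations of the pair $(V,f)$ over $W$ by the two-term complex of sheaves on $V$,
\[
\mathcal{C}^\bullet:\quad T_V \xrightarrow{\;df\;} f^*T_W,
\]
with $T_V$ placed in degree $0$. The point is that a first-order deformation of $f$ over $W$---a first-order deformation $V_\epsilon$ of $V$ together with a map $V_\epsilon\to W$ restricting to $f$---is described on an open cover $\{U_i\}$ of $V$ by a \v{C}ech $1$-cocycle $\{\theta_{ij}\}$ with values in $T_V$ (the Kodaira--Spencer data of $V_\epsilon$) together with sections $\{g_i\}$ of $f^*T_W$ satisfying $g_i-g_j=df(\theta_{ij})$. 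These are exactly the degree-$1$ hypercocycles of $\mathcal{C}^\bullet$, so $\mathbb{H}^1(V,\mathcal{C}^\bullet)$ is the space of infinitesimal deformations of $f$ over $W$, while the obstructions lie in $\mathbb{H}^2(V,\mathcal{C}^\bullet)$.

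With this dictionary in place, the infinitesimal statement is a short diagram chase. The hypercohomology long exact sequence of $\mathcal{C}^\bullet$ contains the segment
\[
H^0(V,f^*T_W)\xrightarrow{\;\alpha\;}\mathbb{H}^1(V,\mathcal{C}^\bullet)\xrightarrow{\;\beta\;}H^1(V,T_V)\xrightarrow{\;f_*\;}H^1(V,f^*T_W),
\]
where the rightmost arrow is precisely the map appearing in the hypotheses. The first hypothesis $H^0(V,f^*T_W)=0$ forces $\alpha=0$, so by exactness $\beta$ is injective, with image equal to $\ker(f_*)$. The second hypothesis says exactly that this kernel is trivial, whence $\mathbb{H}^1(V,\mathcal{C}^\bullet)=0$. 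Thus both hypotheses are used, and $f$ has no nontrivial infinitesimal deformations over $W$: the first kills the deformations of $f$ that leave $V$ unchanged, and the second kills those coming from deformations of $V$ itself.

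Finally I would upgrade this infinitesimal rigidity to the rigidity asserted in the Fact. For this I would invoke the existence of a versal deformation of $f$ over $W$, whose base space has Zariski tangent space $\mathbb{H}^1(V,\mathcal{C}^\bullet)=0$ and is therefore a reduced point. Given any family $\mathcal{V}\to D$ together with a map $\mathcal{F}:\mathcal{V}\to D\times W$ over $D$ with $\mathcal{F}_d=f$ as in the statement, the induced classifying map from a neighbourhood of $d$ in $D$ to the versal base, being a map into a point, must be constant; hence, after shrinking $D$, the family is the pullback of the trivial deformation, which produces the biholomorphism $\phi$ and the commuting diagram. The genuine work---and the main obstacle---is not the cohomological computation above but this last step: constructing a versal deformation of a holomorphic map with fixed target over a possibly non-reduced analytic base and establishing the base-change property that makes the classifying map well defined and its constancy meaningful. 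This analytic input is exactly what Section~3.6 of~\cite{namba} provides, which is why I would ultimately cite it rather than reprove it.
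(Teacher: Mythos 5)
The paper offers no proof of this Fact at all: it is quoted directly from Section~3.6 of Namba's book, so the only ``paper proof'' is that citation. Your reconstruction is correct and is the standard mechanism underlying Namba's theorem. The identification of first-order deformations of $f$ with fixed target $W$ with $\mathbb{H}^1$ of the two-term complex $\mathcal{C}^\bullet: T_V\xrightarrow{\;df\;}f^*T_W$ is right; the long exact sequence segment you use is the correct one (it arises from the short exact sequence of complexes $0\to f^*T_W[-1]\to\mathcal{C}^\bullet\to T_V\to 0$); and the diagram chase showing that the two hypotheses together force $\mathbb{H}^1(V,\mathcal{C}^\bullet)=0$ is exactly right, with each hypothesis playing the role you assign to it: $H^0(V,f^*T_W)=0$ kills deformations of $f$ leaving $V$ fixed, and injectivity of $f_*$ kills those induced by deformations of $V$. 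You also locate the genuine analytic content correctly: passing from vanishing of the infinitesimal deformation space to rigidity over an arbitrary base $D$ (which in the paper's definition may be singular, and in general should be allowed to be non-reduced) requires the existence of a versal deformation of the map $f$ with fixed target, together with the completeness/base-change property that makes the classifying map meaningful; zero Zariski tangent space then forces the versal base to be a simple point and every family to be locally trivial. That existence theorem is precisely what Section~3.6 of Namba provides. So your division of labour---prove the cohomological vanishing, cite Namba for versality---is a legitimate and more informative expansion of the paper's bare citation: the paper's approach buys brevity by treating the statement as a black box, while yours makes visible why these two hypotheses are exactly the right ones.
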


\begin{lemma}
\label{rigid}
Suppose $X$ and $Y$ are compact complex manifolds, $H^0(Y,T_Y)=0$, and $f:Y\to X^n$ is a holomorphic map such that $\pr_i\circ f:Y\to X$ is a finite unramified cover for each $i=1,\dots,n$.
Then $f$ is rigid over $X^n$.
\end{lemma}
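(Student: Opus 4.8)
The plan is to deduce this directly from Fact~\ref{deformation} applied with $V=Y$ and $W=X^n$, so the task reduces to verifying its two hypotheses: that $H^0(Y,f^*T_{X^n})=0$, and that the map $f_*\colon H^1(Y,T_Y)\to H^1(Y,f^*T_{X^n})$ induced by the differential $df\colon T_Y\to f^*T_{X^n}$ is injective. The whole argument rests on one elementary observation, which I would establish first. Writing $f_i:=\pr_i\circ f\colon Y\to X$, each $f_i$ is a local biholomorphism, being a finite unramified cover, so its differential $df_i\colon T_Y\to f_i^*T_X$ is an isomorphism of holomorphic vector bundles on $Y$.

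Next I would invoke the canonical splitting of the tangent bundle of a product, $T_{X^n}=\bigoplus_{i=1}^n\pr_i^*T_X$, to obtain $f^*T_{X^n}=\bigoplus_{i=1}^n f_i^*T_X$. By the observation above each summand $f_i^*T_X$ is isomorphic to $T_Y$, and hence $f^*T_{X^n}\cong T_Y^{\oplus n}$. The first hypothesis is then immediate: since $H^0$ commutes with finite direct sums, $H^0(Y,f^*T_{X^n})\cong H^0(Y,T_Y)^{\oplus n}=0$ by the assumption $H^0(Y,T_Y)=0$.

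For the second hypothesis I would note that, under the splitting $f^*T_{X^n}=\bigoplus_i f_i^*T_X$, the differential $df$ has $i$-th component $df_i$, so on cohomology $f_*$ is the map $\alpha\mapsto\big((df_1)_*\alpha,\dots,(df_n)_*\alpha\big)$ into $\bigoplus_i H^1(Y,f_i^*T_X)$. Since each $df_i$ is a bundle isomorphism, the induced $(df_i)_*$ is an isomorphism on $H^1$; in particular the first component $(df_1)_*$ is injective, and therefore so is $f_*$. Both hypotheses of Fact~\ref{deformation} being met, rigidity over $X^n$ follows at once.

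The only place requiring care is the claim that $df_i$ is a bundle isomorphism, and this is exactly where the unramified hypothesis enters: an unramified cover is \'etale, so its tangent map is fibrewise an isomorphism and hence an isomorphism of bundles. Everything else is bookkeeping with the product decomposition, and I do not anticipate a genuine obstacle. The real content of the lemma is simply the recognition that unramifiedness of the projections turns $f^*T_{X^n}$ into a direct sum of copies of $T_Y$, at which point the vanishing of $H^0(Y,T_Y)$ does all the work.
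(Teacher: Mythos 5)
Your proposal is correct and follows essentially the same route as the paper: both use the unramified hypothesis to identify $f^*T_{X^n}=\bigoplus_{i=1}^n f_i^*T_X\cong T_Y^{\oplus n}$, deduce $H^0(Y,f^*T_{X^n})=0$ from $H^0(Y,T_Y)=0$, and get injectivity of $f_*$ on $H^1$ because its first component $(f_1)_*$ is an isomorphism, then apply Fact~\ref{deformation}. Your explicit remark that the component maps of $f_*$ are the $(df_i)_*$ is just a spelled-out version of the paper's statement that $(f_1)_*$ factors through $f_*$.
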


\begin{proof}
Note that here $\pr_i:X^n\to X$ is the projection onto the $i$th co-ordinate.
Let $f_i:=\pr_i\circ f:Y\to X$.
As each $f_i$ is unramified, we have that
$$\displaystyle f^*T_{X^n}=f^*\left(\bigoplus_{i=1}^n\pr_i^*T_X\right)=\bigoplus_{i=1}^nf_i^*T_X=\bigoplus_{i=1}^nT_Y$$
Hence,
$\displaystyle H^0(Y,f^*T_{X^n})=\bigoplus_{i=1}^nH^0(Y,T_Y)=0$.
On the other hand, the isomorphism $(f_1)_*:H^1(Y,T_Y)\to H^1(Y,f_1^*T_X)$ factors through $f_*:H^1(Y,T_Y)\to H^1(Y,f^*T_{X^n})$, and hence the latter is injective.
So $f:Y\to X^n$ is rigid over $X^n$ by Fact~\ref{deformation}.
\end{proof}

We can now prove the main theorem.

\begin{proof}[Proof of Theorem~\ref{corrdiscrete}]
Suppose $X$ is an $\ot$-manifold of type $X(M,U)$.
As in~\cite{moosamorarutoma}, in order to show that $\corr_n(X)$ is discrete we let $S\in\corr_n(X)$ be arbitrary, consider the irreducible component  $D$ of the Douady space of $X^n$ in which $S$ lives, and show that $D$ is zero-dimensional.
This suffices as it proves that each element of $\corr_n(X)$ is isolated in the Douady space.

Let $Z\subset D\times X^n$ be the restriction of the universal family to $D$.
By the flatness of $Z\to D$, for general $d\in D$, $Z_d\in\corr_n(X)$ also.
Let $\widetilde Z\to Z$ be a normalisation and denote by $f:\widetilde Z\to D\times X^n$ the composition of the normalisation with the inclusion of $Z$ in $D\times X^n$.
Then for general $d\in D$ we have that $f_d:\widetilde Z_d\to X^n$ is such that each projection $\pr_i\circ f_d:\widetilde Z_d\to X$ is a finite surjective map.
In~\cite{battistioeljeklaus} it is shown that $\ot$-manifolds of type $X(\mathcal O_K, U)$, and hence also $\ot$-manifolds of type $X(M,U)$, have no divisors.
So the purity of branch locus theorem (which applies as $\widetilde Z_d$ is normal and $X$ is smooth) implies that $\pr_i\circ f_d$ is a finite unramified covering.
In particular, $\widetilde Z_d$ is a generalised $\ot$-manifold by Lemma~\ref{ot-finitecover}, and so $H^0(\widetilde Z_d,T_{\widetilde Z_d})=0$ by Lemma~\ref{generalisedot-nvf}.
But moreover, by Lemma~\ref{rigid}, $f_d$ is rigid over $X^n$.
It follows that for some open neighbourhood $U$ of $d$ in $D$, $f_U:\widetilde Z_U\to U\times X^n$ is biholomorphic over $U\times X^n$ with $\id_U\times f_d:U\times\widetilde Z_d\to U\times X^n$.
In particular, for all $s\in U$, $Z_s=f_s(\widetilde Z_s)=f_d(\widetilde Z_d)=Z_d$.
The universality of the Douady space now implies that $U=\{d\}$, so that in fact $D=\{d\}$, as desired.
\end{proof}


\end{document}